\documentclass[12pt]{article}
\usepackage[pctex32]{graphics}

\usepackage{amssymb,latexsym,amsmath,epsfig,amsthm}
\newtheorem{theorem}{Theorem}

\newtheorem{example}{Example}

\begin{document}
	
	\title{Invariant measures for piecewise fractional linear maps}
	\author{Fritz Schweiger}
	
	\date{}
	\maketitle
	
	\vskip 30pt
	{\bf Abstract}
	\noindent 
	The first part deals with piecewise fractional linear maps with three branches. Given a map $T$  a map $S$ is called  a related map if some branches of $T$ are replaced by a 'flipped' branch, namely a branch of $1-T$. The main question is if $T$ and $S$ have a common invariant measure. The short second part presents invariant measures of a new type.

	\noindent
	{\em Mathematics Subject Classification (2000)} : 11K55, 28D05, 37A05\\
	{\em Key words}: fibred systems, continued fractions, f-expansion, invariant measure\\
	
	\section*{0. Introduction}
	In this paper Moebius maps (= piecewise fractional linear maps) with $N$ or infinitely many branches are discussed (\cite{Sch95}, \cite{Sch06}, \cite{Sch16} \cite{Sch17}, \cite{Sch18a}, \cite{Sch18b}). There is a given partition $p_j$, $ j \in J$ of the unit interval $B = [0,1]$, where $J$ is a finite or countably infinite set such that the map $T: [0,1]  \to [0,1]$ is bijective from $]p_{j-1}, p_j[$, $j \in J$, onto $ ]0,1[$. The inverse map $V_j$  is called an {\em (inverse) branch} and is given as $$V_j (x)= \frac{ c_j + d_j x}{a_j + b_j x}, \, j \in J. $$
	The Jacobians of these maps are denoted as $$\omega_j = \omega_j(x) = \frac{\left| a_j d_j - b_j c_j\right| }{(a_j + b_l x)^2}, \, j \in J .$$  We will use the same notation for the associated matrices $$V_j =  \left( \begin{array}{cc}a_j &  b_j
		\\ c_j & d_j \end{array} \right), \, j \in J. $$
	Let $\epsilon = +$ stand for an increasing map and $\epsilon = -$ for a decreasing map. We call the map $T$ of type $(\epsilon_1, \epsilon_2, ...)$, $\epsilon_j \in \{+, -\}$, if $V_j$ is increasing or decreasing. If the parameters satisfy some conditions the map $T$ is ergodic and admits a $\sigma$-finite invariant measure. These conditions are different for increasing and decreasing branches and will be given later. \\
	In several investigations the question has been to find an explicit shape of the density of the invariant measure. In the author's papers three possibilities were used. A map $T^{*}:  B^{*} \to B^{*}$ is called a {\em dual map}  if $B^{*}$ is an interval and $T^{*}$ is a piecewise fractional linear map whose branches are given by the adjoint  matrices  
	$$V_j^{*}  =  \left( \begin{array}{cc}a_j &  c_j
		\\ b_j & d_j \end{array} \right), \, j \in J. $$
	Then the function
	$$h(x) = \int_{B^{*}} \frac{dy}{(1+xy)^2}$$
	is (up to a multiplicative factor) the density of the invariant measure for $T$. 
	If there is a map $\psi(t) = \frac{B +Dt}{A+Bt}$ such that $ \psi \circ T = T^{*} \circ  \psi$  (or equivalently $ \psi \circ V_k  = V_k^{*} \circ  \psi$) then the dual is called a {\em natural dual}. In the other case it is called an {\em exceptional dual}. 
	If  ${B^{*}} = [\eta, \theta]$, $\eta < \theta$, then the invariant density is in both cases 
	$$ h(x)= \frac{\theta}{1 + \theta x} - \frac{\eta}{1 + \eta x}.$$
	If the set ${B^{*}}$ shrinks to a point $\xi$ one can see this case as a natural dual as well as an exceptional one. We call it a {\em singular dual}. In this case the density of the invariant measure is
	$h(x)= \frac{1}{(1+\xi x)^2}$.\\
	However, there are other examples where we can write down an explicit form of the density of the invariant measure. This method has been called a {\em $1$-step extension} (see \cite{Sch24}). An example is the following map.
	Let 
	$$Tx = \frac{x}{1-2x}, \, \,0 \leq x \leq \frac{1}{3} $$ $$ Tx =  \frac{1}{x} - 2, \, \, \frac{1}{3} < x \leq \frac{1}{2} $$
	$$ Tx = \frac{1}{x} - 1,  \, \,\frac{1}{2} < x \leq 1 $$
	
	The density of the invariant measure is given by
	$$ h(x) = \frac{1}{x} \sum_{j=0}^\infty \left(  \frac{1}{1+2jx}-\frac{1}{1+(2j+1)x} \right). $$
	This is a function with infinitely many poles, and hence not rational.
	The set $B^{*}= \bigcup_{j=0}^{[\infty}[2j, 2j+1]$ is  associated with this density.\\
	The first part of the paper deals with maps with three branches. One or more branches of the map $T$ are {\em 'flipped'}, this means, they are  substituted by the branch of $1 -T$. This map will be called $S$. 	
	We investigate the question if the map $T$ and a related map $S$ have a common invariant measure.\\
	The second part deals with the discovery of new explicitly given densities for some special cases. Example 1 in Section 4 presents a map with four branches. The density of its invariant measure is
	$$h(x)= \frac{1}{1+x} - \frac{1}{2+x} +\frac{1}{3-x}.$$

\section*{1. Related maps with one 'flip'}
	Here we consider maps $S$ where one branch is substituted by the branch related to $1-T$. Note that in this case the entries $A$, $B$, and $D$ of a possible map $\psi$ is determined by the two unchanged branches.  We first look on the map $S_1$ where the branch $V_\lambda$ is replaced by $$W_\lambda = \left( \begin{array}{cc} 3 \lambda &  -3 \lambda +3
		\\ \lambda & - \lambda \end{array} \right) .$$ Then we consider the map $S_2$ where the branch $V_\mu$ is replaced by $$W_\mu = \left( \begin{array}{cc} 3 \mu &  -3\mu +9
		\\ 2 \mu & - 2 \mu +3 \end{array} \right),$$ and the map $S_3$ where the branch $V_\nu$ is replaced by $$W_\nu = \left( \begin{array}{cc} \nu  &  -\nu +3
		\\ \nu & -\nu +2 \end{array} \right).$$
	\begin{theorem}
		(1) The maps $T$ and $S_1$ both have a natural dual and the same invariant measure if the conditions 
		$$(CT) \, \, \, \lambda \mu + \mu = \lambda \nu + 3 \lambda$$ and
		$$(CS_1) \, \, \, \lambda \mu \nu + 3 \lambda \mu + 12 \mu = 9 \nu +27$$
	are satisfied.	\\
		(2) The maps $T$ and $S_2$ both have a natural dual and the same invariant measure if the conditions 
	$$(CT) \, \, \, \lambda \mu + \mu = \lambda \nu + 3 \lambda$$ and
	$$(CS_2) \, \, \, \lambda \mu \nu = 9 $$
	are satisfied.	\\	
		(3) The maps $T$ and $S_3$ both have a natural dual and the same invariant measure if the conditions 
	$$(CT) \, \, \, \lambda \mu + \mu = \lambda \nu + 3 \lambda$$ and
	$$(CS_3) \, \, \, 4 \lambda \nu - \mu \nu - \lambda \mu \nu + 3 \lambda +3 = 0$$
	are satisfied.			
	\end{theorem}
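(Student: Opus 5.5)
The plan is to reduce the existence of a natural dual to linear algebra on the matrices, and then to observe that $T$ and $S_i$ share the same $\psi$. Write $\psi(t)=\frac{B+Dt}{A+Bt}$ with the symmetric matrix
$$\Psi=\left(\begin{array}{cc}A & B\\ B & D\end{array}\right).$$
The adjoint $V_k^{*}$ is the transpose of $V_k$. So the relation $\psi\circ V_k=V_k^{*}\circ\psi$ holds, up to the harmless scalar ambiguity of Moebius matrices, exactly when $\Psi V_k=V_k^{T}\Psi=(\Psi V_k)^{T}$, i.e. when $\Psi V_k$ is symmetric. For each branch this is a single homogeneous linear equation in $(A,B,D)$, namely equality of the off-diagonal entries of $\Psi V_k$. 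I will also use that the flipped branch of $V=(a\ b;\ c\ d)$ is $1-V=(a\ b;\ a-c\ b-d)$. From the given $W_\lambda,W_\mu,W_\nu$ this lets me recover the three branches of $T$:
$$V_\lambda=\left(\begin{array}{cc}3\lambda & 3-3\lambda\\ 2\lambda & 3-2\lambda\end{array}\right),\quad V_\mu=\left(\begin{array}{cc}3\mu & 9-3\mu\\ \mu & 6-\mu\end{array}\right),\quad V_\nu=\left(\begin{array}{cc}\nu & 3-\nu\\ 0 & 1\end{array}\right).$$

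\textbf{Step 1 (condition (CT)).} I write out the three linear equations coming from $V_\lambda,V_\mu,V_\nu$. A nontrivial $\Psi$ exists iff the $3\times 3$ coefficient determinant vanishes. Expanding it and removing the obvious nonzero factors should give exactly $\lambda\mu+\mu=\lambda\nu+3\lambda$. I also record the solution $(A,B,D)$, which in a natural dual is spanned by the kernel of any two of the three rows.

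\textbf{Step 2 (conditions (CS$_i$)).} For $S_i$ I replace one row, the one belonging to the flipped branch, by the equation coming from $W_\lambda$, $W_\mu$ or $W_\nu$ respectively. The key remark, already made before the theorem, is that the two unchanged rows determine $(A,B,D)$ up to a scalar. Hence $T$ and $S_i$ both admit natural duals iff this common kernel vector also satisfies the new row. Substituting the kernel vector into the $W$-equation, and simplifying with (CT), yields a polynomial identity in $\lambda,\mu,\nu$; I expect it to reduce to (CS$_1$), (CS$_2$) and (CS$_3$) respectively. This elimination is the main computational obstacle. It is routine but messy, and the care lies in dividing out factors that are nonzero under the standing assumptions, so that the condition comes out in the stated irreducible form.

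\textbf{Step 3 (same invariant measure).} Since $T$ and $S_i$ have the same $\psi$, their dual domains $B^{*}=\psi([0,1])=[\eta,\theta]$ coincide, with endpoints $\psi(0)=B/A$ and $\psi(1)=(B+D)/(A+B)$, ordered appropriately. The formula from the introduction then gives the same density
$$h(x)=\frac{\theta}{1+\theta x}-\frac{\eta}{1+\eta x}$$
for both maps. For this to be legitimate I must check that $\psi$ is nondegenerate ($AD\neq B^2$) and that $1+yx\neq 0$ on $B\times B^{*}$, so that $h$ is positive and integrable. If needed, I would instead verify directly the Kuzmin-type equation $h(x)=\sum_k h(V_kx)\,\omega_k(x)$. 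For $S_i$ this follows from the same telescoping argument as for $T$, because $\psi$ conjugates both maps to their duals on the same interval $B^{*}$.
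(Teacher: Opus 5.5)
Your overall strategy is the paper's: the two unchanged branches fix $(A,B,D)$ up to a scalar, the flipped branch adds one linear condition, and a common $\psi$ gives a common $B^{*}=\psi([0,1])$ and hence a common density. The gap is that your reconstruction of $T$ is wrong, and this breaks Step 2. Flipping a branch means using $1-T$ on the same interval $V([0,1])$. Its inverse branch is $y\mapsto V(1-y)$, i.e. the product $VF$ with $F=\left(\begin{array}{cc}1&0\\1&-1\end{array}\right)$, which sends $\left(\begin{array}{cc}a&b\\c&d\end{array}\right)$ to $\left(\begin{array}{cc}a+b&-b\\c+d&-d\end{array}\right)$. Your rule $\left(\begin{array}{cc}a&b\\a-c&b-d\end{array}\right)$ is $FV$, i.e. $y\mapsto 1-V(y)$, and this reflects the image interval. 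As a result your $V_\lambda$ maps onto $[2/3,1]$ and your $V_\nu$ onto $[0,1/3]$, whereas $W_\lambda$ maps onto $[0,1/3]$ and $W_\nu$ onto $[2/3,1]$. So your ``$S_1$'' and ``$S_3$'' are not even piecewise maps over a partition. The correct branches are
$$V_\lambda=\left(\begin{array}{cc}3 & 3\lambda-3\\ 0 & \lambda\end{array}\right),\quad V_\mu=\left(\begin{array}{cc}9 & 3\mu-9\\ 3 & 2\mu-3\end{array}\right),\quad V_\nu=\left(\begin{array}{cc}3 & \nu-3\\ 2 & \nu-2\end{array}\right).$$
For $\lambda=1$, $\mu=\nu=3$ these are the inverse branches of $x\mapsto 3x \bmod 1$.

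Step 1 survives by accident. Your $T$ is $x\mapsto 1-T(1-x)$, and conjugation by $x\mapsto 1-x$ preserves the existence of a natural dual, so you still obtain (CT). Step 2, however, does not give the stated conditions. With your matrices the $V_\mu$ and $V_\nu$ rows have kernel $(A,B,D)=(\mu(\nu-1),\,\mu(3-\nu),\,\mu\nu+3\nu-9\mu+9)$. The $W_\lambda$ row $A(3-3\lambda)-4\lambda B=\lambda D$ then becomes $\mu\nu-\mu=\lambda\nu+3\lambda$, not (CS$_1$), and it is violated by the paper's solution $\lambda=3/4$, $\mu=36/7$, $\nu=9$ of (CT) and (CS$_1$). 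Similarly, for $S_3$ you would get $3\nu=\mu+\lambda\mu+\lambda\nu$, which fails at the paper's solution $\lambda=27/13$, $\mu=153/40$, $\nu=8/3$. With the correct branches, the computation you planned goes through. For $S_1$ the unchanged rows give $A=4\mu-3\nu-9$, $B=-6\mu+3\nu+9$, $D=-\mu\nu+9\mu-3\nu-9$, and the $W_\lambda$ row yields exactly (CS$_1$). For $S_2$ and $S_3$ the $V_\lambda$ row alone forces $A=3-\lambda$, $B=3\lambda-3$. Then $D=\lambda\nu-6\lambda+3$ resp. $D=\lambda\mu-9\lambda+\mu+3$, and the $W_\mu$ resp. $W_\nu$ row gives (CS$_2$) resp. (CS$_3$).
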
 
	\begin{proof}
		(1) We start with  $A = 4 \mu - 3 \nu -9 $, $B = -6\mu + 3 \nu +9$, and   $D = - \mu \nu + 9 \mu - 3\nu -9 $. The condition $A(3 -3\lambda) - 4\lambda B= \lambda D $ gives $(CS_1)$.\\ 
		We insert $\nu = \frac{\lambda \mu + \mu - 3 \lambda}{\lambda}$ into $(CS_1)$ and obtain $$\lambda^2 \mu +\lambda(\mu +3)-9 =0 .$$ $\lambda = \frac{3}{4} $, $\mu = \frac{36}{7}$ , and $\nu =9$ are solutions.\\
		(2) We take $A = 3-\lambda$, $B= 3\lambda -3$, and $D=3 +\lambda \nu -6 \lambda$. The condition
		$A(-3 \mu +9) + B(-5\mu +3)= 2\mu D$ gives $(CS_2)$. \\ We insert $\lambda = \frac{9}{\mu \nu}$ into $(CT)$ and obtain $$\mu^2 \nu + 9\mu -9\nu -27 = 0 .$$ Then $\lambda = \mu =3$ and $\nu =1$ are solutions.\\
		(3) We now take $A = 3-\lambda$, $B= 3\lambda -3$, and $D=\lambda \mu -9 \lambda + \mu +3$. Then 
		$A(3- \nu) + B(2 - 2\nu ) = \nu D$ leads to $(CS_3)$.\\
		If we muliply $(CT)$ by $\nu$ and add it to $(CS_3)$, we obtain $$\lambda \nu^2 -\lambda \nu - 3 \lambda -3= 0.$$ Here, $\lambda = \frac{27}{13} $, $\mu = \frac{153}{40}$ , and $\nu =\frac{8}{3}$ are solutions.		
	\end{proof}

	\section*{2. Related maps with two 'flips'}
	Again, we have to look at three cases. $S_{12}$ has the branches $W_{\lambda}$ and $W_{\mu}$, $S_{23}$ the branches $W_{\mu}$ and $W_{\nu}$, and $S_{13}$ the branches $W_{\lambda}$ and $W_{\nu}$.
	\begin{theorem}
	(1) If the conditions 
	$$(CT) \, \, \, \lambda \mu + \mu = \lambda \nu + 3 \lambda$$ and
	$$(CS_{12}) \, \, \, \mu \nu - \lambda \nu - 3\lambda -3 =0$$ are satisfied then the maps $T$ and $S_{12}$ both have a natural dual but they have the same invariant measure only in the linear case. \\
	(2) If the conditions 
	$$(CT) \, \, \, \lambda \mu + \mu = \lambda \nu + 3 \lambda$$ and
	$$(CS_{23}) \, \, \, \lambda \mu \nu + 3\mu - 9 \nu +3 \lambda \mu  =0$$ are satisfied then the maps $T$ and $S_{12}$ both have a natural dual but they have the same invariant measure only in the linear case. \\
	(3) If the conditions 
	$$(CT) \, \, \, \lambda \mu  + \mu = \lambda \nu  + 3 \lambda$$ and
	$$(CS_{13}) \, \, \, \lambda \mu \nu + 3\mu \nu - 9 \nu -9 =0$$ are satisfied then the maps $T$ and $S_{13}$ both have a natural dual. They have the same invariant measure only in the linear case. 
	\end{theorem}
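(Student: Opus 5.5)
The plan is to follow the method of the proof of Theorem 1 and then add one more step, a comparison of the two invariant densities. A natural dual for a map with inverse branches $V_j$ exists iff there is a symmetric matrix
$$\Psi=\left(\begin{array}{cc} A & B\\ B & D\end{array}\right)\neq 0$$
with $\psi\circ V_j=V_j^{*}\circ\psi$ for all branches. This amounts to $V_j^{*}\Psi=\Psi V_j$ up to a scalar, and it gives one linear homogeneous equation in $(A,B,D)$ per branch.

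\textbf{Step 1: the natural-dual conditions.} For $T$ the three branches $V_\lambda,V_\mu,V_\nu$ give a $3\times 3$ homogeneous system. Its determinant vanishes exactly under $(CT)$, which is the condition already used in Theorem 1. For $S_{12}$, $S_{23}$, $S_{13}$ two of the branches are replaced by the flipped matrices $W_\lambda,W_\mu,W_\nu$ introduced before Theorem 1, and only one branch is unchanged. I will write the three linear equations for each of these maps and expand the determinant. After clearing denominators this should give $(CS_{12})$, $(CS_{23})$ and $(CS_{13})$ respectively. This establishes the first half of each assertion: under $(CT)$ and $(CS_{ij})$ both maps have a natural dual. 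As in Theorem 1, I will record explicit kernel vectors $(A_T,B_T,D_T)$ and $(A_S,B_S,D_S)$ as polynomials in $\lambda,\mu,\nu$.

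\textbf{Step 2: reduce equality of measures to equality of dual intervals.} Since $B^{*}=\psi([0,1])=[\eta,\theta]$ with $\eta=B/A$ and $\theta=(B+D)/(A+B)$ (or in the reverse order), the density is
$$h(x)=\frac{\theta}{1+\theta x}-\frac{\eta}{1+\eta x}=\frac{\theta-\eta}{(1+\theta x)(1+\eta x)}.$$
Up to a constant factor this function determines the unordered pair $\{\eta,\theta\}$, because the poles $-1/\eta$ and $-1/\theta$ are visible. The singular case $\eta=\theta=\xi$ is handled in the same way using $h(x)=(1+\xi x)^{-2}$. So $T$ and $S_{ij}$ have the same invariant measure iff $\{B_T/A_T,(B_T+D_T)/(A_T+B_T)\}=\{B_S/A_S,(B_S+D_S)/(A_S+B_S)\}$. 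Equivalently, the two M\"obius maps $\psi_T$ and $\psi_S$ agree on $\{0,1\}$ or swap these points.

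\textbf{Step 3: elimination, the main obstacle.} I will substitute $\nu=(\lambda\mu+\mu-3\lambda)/\lambda$ from $(CT)$ into $(CS_{ij})$ to get one relation between $\lambda$ and $\mu$. I then add the two equations coming from Step 2, one equation per endpoint, for each of the two matchings. This gives an overdetermined polynomial system in $\lambda,\mu$. I will compute resultants (or factor directly) to show that every admissible solution is the parameter value for which all branches are affine. At that value $A$ (or $B$) vanishes appropriately, $B^{*}$ degenerates to $\{0\}$, and the density is Lebesgue measure; this is what ``the linear case'' means.

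The hard part is this factorisation. I expect the polynomials to share a factor that vanishes exactly at the linear parameters, for example $\lambda=1$ together with the corresponding $\mu,\nu$. The plan is to divide it out and show that the remaining factors have no common root satisfying the admissibility and ergodicity constraints on the parameters. I would try the matching $\psi_T(0)=\psi_S(0)$ first, because there the unchanged branch forces the quickest cancellation.
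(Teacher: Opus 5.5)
Your set-up is the paper's: kernel vectors $(A,B,D)$ from one linear equation per branch, comparison of the unordered endpoint pairs $\{B/A,(B+D)/(A+B)\}$, then elimination with $(CT)$. For parts (1) and (2) this works, and it is simpler than your resultant plan suggests.

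In (1) both maps give $(B+D)/(A+B)=(\nu-3)/2$ automatically. So only $B/A=B'/A'$, i.e. $\lambda\mu\nu=9$, is left. With $(CT)$ this gives $(1+\lambda)\mu^2-3\lambda\mu-9=0$, whose roots are $3$ and $-3/(1+\lambda)$. Hence $\mu=3$, then $\lambda=1$ and $\nu=3$.

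In (2) the left endpoints coincide, and $D=D'$ again forces $\lambda\mu\nu=9$. Then $(CT)$ and $(CS_{23})$ give $\mu=3$, $\lambda\nu=3$ and $\nu-\lambda=2$, hence the linear case.

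In both (1) and (2) one endpoint is shared, so the swapped matching collapses into the singular case and nothing is lost. Your explicit treatment of both matchings is more careful than the paper's here.

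The step that fails is Step 3 for part (3), and it fails because the claim is false there. For $S_{13}$ take $A'=2\lambda\nu+2\lambda$, $B'=-2\lambda\nu+3\nu-3\lambda$, $D'=2\lambda\nu+6\lambda-6\nu+6$. For $T$ use $B/A=(3\lambda-3)/(3-\lambda)$ and $(B+D)/(A+B)=(\nu-3)/2$. The swapped matching is $B/A=(B'+D')/(A'+B')$ together with $(B+D)/(A+B)=B'/A'$. These cross-multiply to $6(\lambda\nu-3)=0$ and $2\nu(\lambda\nu-3)=0$. Then $(CT)$ reads $(\lambda+1)(\mu-3)=0$, and $(CS_{13})$ holds identically. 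So the whole curve $\mu=3$, $\lambda\nu=3$ survives.

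This curve consists of admissible non-linear maps. Take $\lambda=\frac{3}{2}$, $\mu=3$, $\nu=2$, and recover $V_j(x)=W_j(1-x)$:
\begin{itemize}
\item $T$ has inverse branches $\frac{x}{2+x}$, $\frac{1+x}{3}$, $\frac{2}{3-x}$;
\item $S_{13}$ has inverse branches $\frac{1-x}{3-x}$, $\frac{1+x}{3}$, $\frac{2}{2+x}$.
\end{itemize}
Both maps are uniformly expanding (all inverse-branch derivatives are at most $\frac{1}{2}$), hence ergodic. Both have invariant density $\frac{1}{1+x}+\frac{1}{2-x}$: the dual interval $[-\frac{1}{2},1]$ is run through in opposite directions by $\psi_T$ and $\psi_S$.

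The reason is structural. The condition $\lambda\nu=3$ means $V_\nu(x)=1-V_\lambda(1-x)$, and $\mu=3$ makes the middle branch affine. So $T$ commutes with $x\mapsto 1-x$, its density is symmetric, and flipping the two outer branches only interchanges two terms of the Kuzmin sum.

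No factorisation or ergodicity argument can therefore yield ``only the linear case''. The correct conclusion for (3) is the one of Theorem 3: $T$ and $S_{13}$ have a common invariant measure if and only if $\mu=3$ and $\lambda\nu=3$. The paper's own proof of (3) also ends at exactly $\lambda\nu=3$, $\mu=3$ in the swapped case without reaching linearity, so it does not establish the stated claim either.
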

\begin{proof}
(1) Using the branches 	$W_{\mu}$ and $V_{\nu}$, we obtain $A'= \mu \nu -3$, $B' = 9 -\mu \nu$, and $D'= -18 + 3 \nu + \mu \nu$. The condition $A'(-3\lambda +3) - 4\lambda B' = \lambda D'$ gives condition $(CS_{12})$. If we insert $\nu = \frac{\lambda \mu + \mu - 3 \lambda}{\lambda}$ into this condition we obtain the quadratic equation 
$$\mu^2 (1+ \lambda) - \mu(4 \lambda + \lambda^2) - 3\lambda  =0.$$
Then  $\lambda = \frac{1}{2} $, $\mu = 2$ , and $\nu =3$ are solutions.\\
Clearly, $$\frac{B+D}{A+B} = \frac{\nu -3}{2}= \frac{B'+D'}{A'+B'},$$  but $$\frac{B}{A} = \frac{3\lambda -3}{3 -\lambda} =  \frac{B'}{A'} = \frac{9 - \mu \nu}{\mu \nu -3}$$ leads to $\lambda \mu \nu = 9$. We insert $\nu = \frac{9}{\lambda \mu }$ into equation $(CT)$ and obtain a second quadratic  equation
$$(1+ \lambda)\mu^2  -3 \lambda \mu - 9  =0.$$ Then we find $\mu = 3$. The first quadratic equation shows $\lambda =1$ and condition $(CT)$ gives $\nu =3$, the linear case.\\
(2) In this case we find $A = A' = 3 -\lambda$, $B=B' = 3 \lambda  -3$, and $D' = \frac{9-6\lambda \mu + 3 \mu }{\mu} = 
\frac{3 + 3 \lambda + 3 \nu -5 \lambda \nu}{\nu}$ and obtain in a similar way condition $(CS_{23})$. A solution for conditions $(CT)$ and $(CS_{23})$ is given by $\lambda =1$, $\mu = \frac{9}{2}$, and $\nu =6$.\\
From $$\frac{B+D}{A+B} = \frac{\nu -3}{2} = \frac{B+D'}{A+B}= \frac{9-3\lambda \mu}{2 \lambda \mu}$$ we deduce $D=D'$ and $\lambda \mu \nu =9$ .  Then we obtain the quadratic equation
$$(\lambda +1)\mu^2 - 3\lambda \mu - 9 = 0$$ and hence $\mu =3$.
If we insert $\nu = \frac{\lambda \mu + \mu - 3 \lambda}{\lambda}$ into $(CS_{23})$ then we obtain a second quadratic equation
$$ (\lambda^2 + \lambda)\mu^2 - (6 \lambda +9)\mu  +27 \lambda =0$$ and $\lambda =1$.
 Again we find the linear case.\\
(3) In this case we take $A' = 2 \lambda \nu + 2 \lambda$, $B' = -2 \lambda \nu + 3 \nu - 3 \lambda$, and $D' = 2 \lambda \nu + 6 \lambda - 6 \nu +6$. Then we obtain $(CS_{13})$.  A solution for conditions $(CT)$ and $(CS_{13})$ is given by $\lambda =2$, $\mu = 3$, and $\nu = \frac{3}{2}$.\\
 From  $$ \frac{\nu -3}{2}=\frac{B+D}{A+B}= \frac{B'+D'}{A' +B'} = \frac{3 \lambda - 3 \nu +6}{3 \nu - \lambda}$$ we find $\lambda = \frac{3\nu^2 - 3\nu -12}{\nu +3}$.  Since $$\frac{3 \lambda -3}{3- \lambda} = \frac{B}{A} = \frac{B'}{A'}= \frac{- 2\lambda \nu + 3 \nu - 3 \lambda}{2 \lambda \nu + 2 \lambda }$$ we deduce the equation
$$\nu^5 - \nu^4 -8 \nu^3 + 15\nu +9 = (\nu -3)(\nu^2 -3)(\nu +1)^2=0.$$ However, $\nu =3$ leads to the linear case and $\nu = \sqrt{3}$ gives $\lambda < 0$.\\
If $\frac{B}{A} = \frac{B' +D'}{A'+B'}$, namely $$\frac{3 \lambda -3}{3- \lambda}= \frac{3 \lambda - 3 \nu +6}{3 \nu - \lambda},$$ we obtain $\lambda \nu =3$. If we insert this value in $(CS_{13})$ then we we get $\mu + \mu \nu = 3 \nu +3$ and eventually $\mu = 3$. 
\end{proof}

\section*{3. Related maps with three 'flips'}
Here, $S_{123}$ is the only case. However, the results are a little bit surprising.
\begin{theorem}
The conditions 
		$$(CT) \, \, \, \lambda \mu + \mu = \lambda \nu + 3 \lambda$$ and 
		$$ (CS_{123}) \,\,\, \lambda \mu - 4 \lambda \nu + \mu \nu - 3 \lambda + 4 \mu - 3 \nu =0$$ characterize a natural dual of of $T$ and $S_{123}$. These maps have the same invariant measure if and only if $\mu =3$ and $\lambda \nu =3$.
\end{theorem}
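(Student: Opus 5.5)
The plan is to handle $S_{123}$ exactly as the one- and two-flip cases were handled in the proofs of Theorems 1 and 2.

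\textbf{Natural duals.} A natural dual given by $\psi(t)=\frac{B+Dt}{A+Bt}$ exists precisely when $\psi\circ V_k=V_k^{*}\circ\psi$ for all three branches. In matrix form, with $M=\left(\begin{array}{cc}A & B\\ B & D\end{array}\right)$, each branch requires $M V_k$ to be symmetric up to the scalar normalisation. Each such requirement is a single linear equation in $(A,B,D)$.
\begin{itemize}
\item For $T$, the three branches give a homogeneous $3\times 3$ linear system in $(A,B,D)$. A nontrivial solution exists iff its determinant vanishes, and I expect this determinant to factor to $(CT)$.
\item For $S_{123}$, the branches $V_\lambda,V_\mu,V_\nu$ are replaced by $W_\lambda,W_\mu,W_\nu$. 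Since $W_k$ corresponds to $1-V_k$, the new linear system is obtained by a fixed change of coordinates, and its determinant should reduce to $(CS_{123})$.
\end{itemize}
As in the earlier proofs, I would solve two of the equations explicitly. This gives $A,B,D$ for $T$ and $A',B',D'$ for $S_{123}$ as polynomials in $\lambda,\mu,\nu$, and I would substitute them into the third equation to obtain the conditions.

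\textbf{Comparing the invariant measures.} For a natural dual, $B^{*}=\psi([0,1])$ has endpoints $\psi(0)=B/A$ and $\psi(1)=\frac{B+D}{A+B}$. The density is $h(x)=\frac{\theta}{1+\theta x}-\frac{\eta}{1+\eta x}$, and it determines the unordered pair $\{\eta,\theta\}$, since these are the pole data of $h$. So $T$ and $S_{123}$ have the same invariant measure iff
\[
\{B/A,\ (B+D)/(A+B)\}=\{B'/A',\ (B'+D')/(A'+B')\}.
\]
This splits into two cases, matched endpoints and swapped endpoints, exactly as in the proof of Theorem 2(3). In each case I would clear denominators and eliminate one variable using $(CT)$, for instance $\nu=\frac{\lambda\mu+\mu-3\lambda}{\lambda}$. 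I would then combine the result with $(CS_{123})$ and factor, expecting factors such as $\mu-3$ or $\lambda\nu-3$ to appear. The other factors should then be excluded by positivity of the parameters (needed for the branches to map onto $[0,1]$ with the correct monotonicity) or by degeneracy of $\psi$, e.g.\ $AD-B^2=0$.

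\textbf{Converse.} If $\mu=3$, then $(CT)$ becomes $3\lambda+3=\lambda\nu+3\lambda$, so it is equivalent to $\lambda\nu=3$. Substituting $\mu=3$ into $(CS_{123})$ gives
\[
3\lambda-4\lambda\nu+3\nu-3\lambda+12-3\nu=12-4\lambda\nu,
\]
which vanishes exactly when $\lambda\nu=3$. Thus the family $\mu=3$, $\lambda\nu=3$ satisfies both conditions. For this family I would then verify directly that the two endpoint pairs coincide. I expect one of $A,B,D$ and $A',B',D'$ to become proportional to the other, reflecting the symmetry $x\mapsto 1-x$ when $\psi$ commutes appropriately.

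\textbf{Main obstacle.} The hard part is the elimination in the comparison step. The resulting polynomial may have high degree, as in Theorem 2(3), where a quintic appeared. The risk is spurious roots that must be ruled out by sign conditions. My first attempt would be to eliminate $\nu$ via $(CT)$ and then take the resultant in $\mu$ against $(CS_{123})$, looking for the factor $\mu-3$. Failing that, I would use the combination $(CS_{123})+\nu\cdot(CT)$, which worked in Theorem 1(3), to reduce the degree first.
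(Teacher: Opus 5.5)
Your plan is correct and follows the paper's proof. You compute $(A,B,D)$ and $(A',B',D')$ from two branches and impose the third to get $(CT)$ and $(CS_{123})$. You then split $\{B/A,\frac{B+D}{A+B}\}=\{B'/A',\frac{B'+D'}{A'+B'}\}$ into the matched and swapped cases, and positivity excludes the spurious factors $\lambda+1$ and $\mu$.

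One correction to your expectation for the converse. The family $\mu=3$, $\lambda\nu=3$ comes from the \emph{swapped} case, not from $(A',B',D')$ being proportional to $(A,B,D)$. In fact $\psi'=\psi\circ(1-x)$, which happens exactly when $D=-2B$. Proportionality is the matched case, and it forces the linear case $\lambda=1$, $\nu=3$. The elimination is also easier than you fear: $B/A=\frac{B'+D'}{A'+B'}$ alone reduces to $(\lambda+1)(\mu-3)=0$.
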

\begin{proof}
We obtain $A' = \lambda + \lambda \mu$, $B'= - 3\lambda + 2 \mu - \lambda\mu$, and $D'= 9 \lambda - 5 \mu + \lambda \mu +3$. The condition
$$ (CS_{123}) \,\,\, \lambda \mu - 4 \lambda \nu + \mu \nu - 3 \lambda + 4 \mu - 3 \nu =0$$ follows in the usual way. Examples are easy to find, e. g. $\lambda = 2$ and $\mu = \nu =6$.\\
If $\mu =3$ and $\lambda \nu =3$, then a natural dual exists for both maps. We can take $A' = 2 \lambda$, $B' = 3- 3 \lambda$, and $D' = 6 \lambda -6$ and furthermore 
$A = 3 - \lambda$, $B = 3 \lambda - 3$, and $D = 6 -6 \lambda$. Then we calculate $\frac{B}{A} = \frac{B' + D'}{A'+B'}$ and $\frac{B+D}{A+B}=\frac{B'}{A'}$. A nice example is given by $\lambda  =3$,  $\mu = 3$, and $ \nu =1$.\\
If $\frac{B}{A} = \frac{B' + D'}{A'+B'}$ and $\frac{B+D}{A+B}=\frac{B'}{A'}$ then we deduce 
 $\mu(\lambda +1) = 3 (\lambda +1) $. This gives $\mu =3$. 
If $\frac{B}{A} = \frac{B'}{A'}$ and $\frac{B+D}{A+B}=\frac{B'+D'}{A'+B'}$ then we deduce two equations,namely
$$\lambda^2 \mu  + \lambda ( -\mu^2 + 4 \mu +3) - \mu^2 = 0$$
and 
$$\lambda^2 \mu  +  \lambda (\mu +3) -3 \mu = 0.$$ Hence $\lambda(\mu^2 - 3 \mu) = 3\mu - \mu^2. $ Since $\lambda \neq -1$, we get $\mu = 3$.
\end{proof}
 {\bf Remark}
 This paper covers only a small fraction of possible combinations  but it tries to show what can be expected. A last example will be given. We look at the map $T$ with an exceptional dual of type $\lambda \nu \mu$ or $\mu \nu \lambda$  as described in the introduction. We look for parameters $\lambda$, $\mu$, and $\nu$ such that the map $S_{123}$ has a natural dual wich implies condition 
 $ (CS_{123})$. Clearly, the linear case with $\lambda =1$, $\mu = 3$, and $\nu =3 $ is a solution but other solutions seem to unusually complicated. An example is given by $\lambda $, the positive root of $448 \lambda^2 +283 \lambda - 1113 =0$, $\mu = \frac{63}{16}$, and $\nu = \frac{7 \lambda + 7}{4}$. For the exceptional dual of $T$ one finds $\eta = \frac{1}{2}$ and $\xi = \frac{3 \lambda - 3}{3 - \lambda} = \frac{-257 +448 \lambda}{628}$.
 
  \section*{4. New measures through the back door}
 
 The starting point for this paper was a second look at a generalization of continued fractions (see \cite{Da} and \cite{Sch21}). The proof needs only the following  well known theorem.
 \begin{theorem}
 Let $U: [0. 1] \to [0,1]$ be a piecewise fractional map with invariant measure $\nu$ and $Z: [0. 1] \to [0,1]$  be a piecewise fractional map with invariant measure $\mu$.
 If  a piecewise fractional linear map $\phi$ with $\phi[0,1] = [0,1]$ satifies $\phi \circ U = Z \circ \phi$, then $\nu(E) = \mu( \phi^{-1}E)$ is an invariant measure for $U$.
 \end{theorem}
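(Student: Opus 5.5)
The plan is to verify invariance directly from the definition by transporting sets through $\phi$, using only that $\phi$ is a bijection of $[0,1]$ onto itself. A fractional linear map $\phi$ with $\phi[0,1]=[0,1]$ has no pole in $[0,1]$ and is strictly monotone there. Hence it is a homeomorphism of $[0,1]$: it maps Borel sets to Borel sets, and $\phi^{-1}$ is again fractional linear.

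The first step is to fix the direction of the transport. The conjugacy $\phi\circ U = Z\circ\phi$ can be rewritten as $U=\phi^{-1}\circ Z\circ\phi$, so $\phi$ carries the dynamics of $U$ onto those of $Z$. The measure to define on the $U$-side is therefore $\nu(E)=\mu(\phi E)$, the image of $E$ under $\phi$, which is the preimage of $E$ under the map $\phi^{-1}$. This is how I will read the expression $\mu(\phi^{-1}E)$ in the statement. If one prefers to read $\phi^{-1}E$ as the preimage under $\phi$, the same argument works with the roles of $U$ and $Z$ exchanged, i.e.\ with the conjugacy $\phi^{-1}\circ Z = U\circ \phi^{-1}$.

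The key step is the set identity
$$\phi\left(U^{-1}E\right) = Z^{-1}(\phi E).$$
To prove it, take $x\in[0,1]$. Then $x\in U^{-1}E$ if and only if $Ux\in E$. By bijectivity of $\phi$ this holds if and only if $\phi(Ux)\in\phi E$. By the conjugacy, $\phi(Ux)=Z(\phi x)$, so this is equivalent to $\phi x\in Z^{-1}(\phi E)$. Since every point of $[0,1]$ is of the form $\phi x$, the identity follows. Applying $\mu$ and using the $Z$-invariance of $\mu$ gives
$$\nu(U^{-1}E)=\mu\left(Z^{-1}\phi E\right)=\mu(\phi E)=\nu(E),$$
so $\nu$ is $U$-invariant. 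The $\sigma$-additivity of $\nu$ is inherited from $\mu$ because $\phi$ is a bijection. If $\mu$ has density $k$, the substitution rule shows that $\nu$ has density $h(x)=k(\phi x)\,|\phi'(x)|$. This explicit formula is what Section~4 will actually use.

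The main obstacle is not analytic but one of bookkeeping. First, the direction of $\phi$ and the meaning of $\phi^{-1}E$ must be fixed correctly, as discussed above. Second, piecewise maps $U$ and $Z$ are only defined, or only satisfy $\phi\circ U=Z\circ\phi$, off the countable set of partition points $p_j$ and their images. I will handle this by noting that the conjugacy holds outside a countable set $N$. Because $\phi$ is a homeomorphism, all sets involved differ from the exact ones only by countable sets, and these are null for the non-atomic (absolutely continuous) invariant measures considered in this paper. Hence the computation above is valid up to null sets, which is all that invariance requires.
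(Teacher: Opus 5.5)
Your argument assumes $\phi$ is a single fractional linear map and hence a homeomorphism of $[0,1]$. The theorem, however, is about a \emph{piecewise} fractional linear map. In the intended application $\phi=S$, with $U=T\circ S$ and $Z=S\circ T$, and this $\phi$ has several branches, even countably many (the Gauss map in Example 4). For a non-injective $\phi$, every step of yours that uses bijectivity fails:
\begin{itemize}
\item Your measure $\nu(E)=\mu(\phi E)$ is not additive, because images of disjoint sets can overlap.
\item The equivalence $Ux\in E \Leftrightarrow \phi(Ux)\in\phi E$ breaks down. One only gets $\phi(U^{-1}E)\subseteq Z^{-1}(\phi E)$, not equality.
\item Your density $k(\phi x)\,|\phi'(x)|$ is not the formula Section 4 uses. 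That formula is $g(x)=\sum_k h(\zeta_k(x))\,|\zeta_k'(x)|$, summed over the inverse branches $\zeta_k$ of $\phi$.
\end{itemize}
A $Z$-invariant measure cannot be pulled back along a many-to-one $\phi$ by a formula like $\mu(\phi E)$. The transport that works is the push-forward from the $U$-side to the $Z$-side.

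The missing idea is to work only with preimages, which need no injectivity. You were right that the literal statement is off, but the problem is not the meaning of $\phi^{-1}E$. The labels of $\mu$ and $\nu$ are interchanged, as the paper's proof and the density formula right after it show. The intended reading is that $\mu$ is the $U$-invariant measure and $\nu(E)=\mu(\phi^{-1}E)$, with an ordinary preimage, is $Z$-invariant. From $Z\circ\phi=\phi\circ U$ one gets, for arbitrary maps, $\phi^{-1}(Z^{-1}E)=(Z\circ\phi)^{-1}E=(\phi\circ U)^{-1}E=U^{-1}(\phi^{-1}E)$. Hence $\nu(Z^{-1}E)=\mu(U^{-1}\phi^{-1}E)=\mu(\phi^{-1}E)=\nu(E)$. $\sigma$-additivity of $\nu$ is automatic because preimages preserve disjoint unions. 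Splitting $\phi^{-1}E$ along the full branches of $\phi$ gives the transfer-operator density above. This is exactly the paper's one-line proof, written there with $T,S$ in place of $Z,U$. Your aside about exchanging the roles of $U$ and $Z$ points in this direction, but you again phrased it through an inverse map $\phi^{-1}$, which does not exist here.
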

\begin{proof}
$$\nu(T^{-1}E ) = \mu(\phi^{-1}T^{-1}E)= \mu(S^{-1}\phi^{-1}E) = \mu(\phi^{-1}E)= \nu(E).$$
 \end{proof}
Let $\zeta_{k}$ denote the inverse branches of $\phi$. If $h$ is the density of $\mu$ and $g$ the density of $\nu$ then
$$g(x) = \sum_k h(\zeta_k(x)) \left| \zeta'_k (x) \right| .$$
If $Ux = T(Sx)$ and $Zx = S(Tx)$ then we take $\phi = S$. If $h$ is the density of the invariant measure for $U$ and $g$ the density of the invariant measure for $Z$ then
$$g(x)= \sum_j h(V_jx) \omega_j(x).$$
We now give some examples and remark that the densities given by this device have no obvious relation to a possible dual map of $Z^{*}$. There are some exceptions (see Remark and Example 6). Clearly, if $S$ and $T$ have the same invariant measure, this measure is also invariant for $U$ and $Z$.
\begin{example}
Let $S$ be the map on $[0,1]$ with the inverse branches $V_{\alpha}x = \frac{1-x}{2+x}$ and $V_{\beta}x = \frac{3-2x}{3-x}$ and $T$ be the map on $[0,1]$ with the inverse branches $V_{\gamma}x = \frac{3-3x}{6-x}$ and $V_{\delta}x = \frac{3-2x}{3-x}$. Then $U = T \circ S$ has the inverse branches $V_{\alpha \gamma }x = \frac{3+2x}{15 -5x}$, $V_{\alpha \delta }x = \frac{x}{9-4x}$, $V_{\beta \gamma}x = \frac{4+x}{5}$ , and $V_{\beta \delta}x = \frac{3+x}{6-x}$. This map has a natural dual on
$B^{*} = [-\frac{1}{2}, 0] $, Therefore the density of the invariant measure is $h(x)= \frac{1}{2-x}$. The density of the invariant measure of $Z=S \circ T$ is $$g(x)= h(V_{\alpha}x)\omega_{\alpha}(x) + h(V_{\beta}x)\omega_{\beta}(x) = \frac{1}{1+x} - \frac{1}{2+x} +\frac{1}{3-x}.$$
If one uses the inverse branches of $Z$, namely 
$V_{ \gamma \alpha}x = \frac{3+6x}{11 + 7x}$, $V_{\beta \gamma }x = \frac{3x}{15-4x}$, $V_{\delta \alpha}x = \frac{4+5x}{5+4x}$ , and $V_{\delta \beta}x = \frac{3+x}{6-x}$ one can also verify Kuzmin's equation for $g$ by direct calculation.
\end{example}
The next examples are given in shorter form!
\begin{example}
Let $V_{\alpha} x = \frac{x}{1+x}$ and  $V_{\beta} x = \frac{1}{2-x}$ be the inverse branches of $S$ and  $V_{\gamma} x = \frac{x}{3-x}$ and $V_{\delta} x = \frac{1}{2-x}$ be the inverse branches of $T$. The map $U = T \circ S $ has the density $h(x) = \frac{1}{1-x}$ and $Z= S \circ T$ the density $g(x)= \frac{1}{1+x} + \frac{1}{1-x} - \frac{1}{2-x}$.
\end{example}
\begin{example}
If $V_{\alpha} x = \frac{1-x}{2}$, $V_{\beta} x = \frac{1+x}{2}$, $V_{\gamma} x = \frac{x}{1+x}$,	 and $V_{\delta} x = \frac{1+x}{1+3x}$, then $U$ has the invariant density $h(x)= \frac{1}{x}$ and $Z$ has the invariant density $g(x)= \frac{1}{1-x} +  \frac{1}{1+x}$. This density looks familiar, but the branches of the dual map of $Z$ give no clear  explanation.  
\end{example}
\begin{example}
	Let $Sx = \frac{1}{x} -k$, $ k= \lfloor \frac{1}{x} \rfloor$ and $Tx = a x - j$ , $a \geq 2$ a fixed natural number and $j = \lfloor ax \rfloor$ Then 
	$$U x =T(Sx) = \frac{a}{x} -(ak + j).$$ 
	The density of its invariant measure is well known, namely $h(x)= \frac{1}{a+x}$. \\
	Now we look at $$Zx = S(Tx) = \frac{1+kj - akx}{-j +ax}.$$ Our method gives the density of the invariant measure as
	$$ g(x)= \sum_{m=1}^{\infty}h(\frac{1}{m+x})\frac{1}{(m+x)^2} = \sum_{m=1}^{\infty}\frac{1}{(am+1+ax)(m+x)}.$$
	\end{example}
\begin{example}
	If $Sx = \frac{x}{1-x}$, $k= \lfloor \frac{x}{1-x} \rfloor$ and $Tx = ax- \lfloor ax \rfloor $, then the density for $U =T\circ S$ is given by $h(x)= \frac{1}{a-1+x}$ and the density for $Z = S \circ T$ is 
	$$g(x)= \sum_{m=0}^{\infty} \frac{1}{(am + a-1 +ax)(m+1+x)}.$$
\end{example}
{\bf Remark} If all  branches of the map $U$ have a common fixed point $\kappa$ then $\xi= - \frac{1}{\kappa}$ is a common fixed point of the branches of $U^{*}$. Then $U$ has a singular dual with density $\frac{1}{(1+ \xi x)^2}$, and $Z$ has the singular dual with density $\frac{1}{(1+ \eta x)^2}$ where $ \eta = S \xi$. 
\begin{example}
Let $Sx = \frac{2-4x}{2+x}, 0 \leq x < \frac{1}{2}$ and $Sx = \frac{-3 +6x}{2+x}, \frac{1}{2} \leq x \leq 1.$ The second map $T$ is given as
$Tx = \frac{4x}{3-2x}, 0 \leq x < \frac{1}{2}$ and $Tx = \frac{4 -4x}{1+2x} , \frac{1}{2} \leq x \leq 1.$ The map $U = T \circ S $ has the density $h(x) = \frac{1}{(2+x)^2}$ and $Z= S \circ T$ the density $g(x)= 1$ which corresponds to $\eta = S \frac{1}{2} = 0$.
\end{example}
 {\bf Remark} This method was used to give a derivation for the density of the invariant measure of an $f$-expansion on $[0,1]$, namely the map $f^{-1}x = Ax + (2-A)x^2$ mod ${1}$ (see \cite {Th}) using the invariant measure of $Tx = Ax + (4-2A)x^2, 0 \leq x < \frac{1}{2} \, , Tx = 1-A + (-4+3A)x + (4-2A)x^2, \frac{1}{2} \leq x \leq 1$ (see \cite {Sch18a}).

Fritz Schweiger\\
University of Salzburg, FB Mathematik\\
Hellbrunnerstr. 34, A 5020 Salzburg, Austria\\
fritz.schweiger@plus.ac.at
	\end{document}